\title[LP bounds for spherical $(k,k)$-designs]{\bf Linear programming bounds for spherical $(k,k)$-designs}
\def\ds{\displaystyle}
\date{\today}
\newtheorem{theorem}{Theorem}[section]
\theoremstyle{definition}
\newtheorem{definition}[theorem]{Definition}
\newtheorem{example}[theorem]{Example}
\newtheorem{remark}[theorem]{Remark}
\author[P. Boyvalenkov]{P. G. Boyvalenkov$^\dagger$}
\address{Institute of Mathematics and Informatics, Bulgarian Academy of Sciences,
8 G Bonchev Str.,
1113  Sofia, Bulgaria \\
and Technical Faculty, South-Western University, Blagoevgrad, Bulgaria.
}
\email{peter@math.bas.bg}
\thanks{\noindent $^\dagger$ This research was partially supported by Bulgarian NSF under project KP-06-N32/2-2019. }
\date{}
\def\ds{\displaystyle}
\begin{document}
\maketitle

\begin{abstract}
We derive general linear programming bounds for spherical $(k,k)$-designs. This includes lower bounds
for the minimum cardinality and lower and upper bounds for minimum and maximum energy, respectively.
As applications we obtain a universal bound in sense of Levenshtein for the minimum possible cardinality 
of a $(k,k)$ design for fixed dimension and $k$ and corresponding optimality result. We also discuss examples and
possibilities for attaining the universal bound. 
\end{abstract}
{\bf Keywords.} Spherical $(k,k)$-designs, linear programming.
{\bf MSC Codes.} 05B30

\section{Introduction}

A nonempty finite set $C \subset \mathbb{S}^{n-1}$ is called a {\em spherical code}. 
The geometry of spherical codes is related to the properties of the Gegenbauer polynomials \cite{Sze}; we consider their 
normalized version $\{P_i^{(n)}(t)\}_{i=0}^\infty$ satisfying the following three-term recurrence relation 
\[ (i+n-2)P_{i+1}^{(n)}(t)=(2i+n-2)tP_i^{(n)}(t)-iP_{i-1}^{(n)}(t), \]
$i=1,2,\ldots$, with initial conditions $P_0^{(n)}(t)=1$ and $P_1^{(n)}(t)=t$. 

Given a code $C \subset \mathbb{S}^{n-1}$, the quantities 
 \begin{equation}\label{Mk0}
 M_i(C):=\sum_{x,y\in C} P^{(n)}_i(\langle x , y \rangle )=|C|+\sum_{x,y\in C, x \neq y} P^{(n)}_i(\langle x , y \rangle ), \ i \geq 1 \end{equation}
are called {\it moments} of $C$. Here $\langle x,y \rangle $ is the usual inner product of $x,y \in \mathbb{S}^{n-1}$. 

The well known positive definiteness of the Gegenbauer polynomials \cite{Sch1942} implies that $M_i(C) \geq 0$ for every $i \geq 1$.
The case of equality (for some indices $i$) is quite important. The concept of spherical $T$-designs was introduced by Delsarte and Seidel \cite{DS89} in 1989. 

\begin{definition} \label{T-designs} \cite{BBTZ17} Let $T$ be a finite set of positive integers. A spherical code $C \subset \mathbb{S}^{n-1}$ 
is called a spherical $T$-design if $M_i(C)=0 \mbox{ for all } i \in T$. 
\end{definition}

The classical case $T=\{1,2,\ldots,m\}$ leads to the spherical $m$-designs introduced by Delsarte, Goethals and Seidel \cite{DGS} in 1977
(see also \cite{Lev-chapter}). The case of $T$ consisting of even integers was considered by Bannai et al in \cite[Section 6.1]{BBTZ17} 
(see also \cite{BOT,DS89,ZBBKY17}). In this paper we consider $T$ consisting of several consecutive even integers $2,4,\ldots$ 
(see \cite{DS89,KP10,Wal-book}). 

\begin{definition} Let $k$ be a positive integer. The set $C \subset \mathbb{S}^{n-1}$ is called a spherical $(k,k)$-design if 
$M_{2i}(C)=0$ for every $i=1,2,\ldots,k$.
\end{definition}


It seems that spherical $(k,k)$ designs were first considered in \cite{KP10} (called semi-designs there). 
Recently, theory was developed (see \cite{Wal-book} and references therein) and relations to tight frames (i.e., $(1,1)$-designs) 
were investigated. However, up to best of our knowledge, linear programming for spherical $(k,k)$-designs is not developed yet. 

In Section 2 we formulate three main problems that can be attacked by linear programming. General linear programming 
bounds are derived in Section 3. Section 4 is devoted to universal lower bound for the minimum possible cardinality of 
$(k,k)$ designs for fixed $n$ and $k$ and its optimality. In Section 5 we show some examples and classification results
for codes attaining the universal bound. 

\section{Cardinality and energy problems for spherical $(k,k)$-designs}

Designs are, in general sense, good approximations of the space they live. Thus it natural to know designs with as less as 
possible points. Thus, we are interested in the quantity
\[ \mathcal{M}(n,k):=\min \{ |C|: C \subset \mathbb{S}^{n-1} \mbox{ is a $(k,k)$-design}\}, \]
the minimum possible cardinality of a $(k,k)$-design in $\mathbb{S}^{n-1}$. 

Recently, importance of energy of spherical designs was recognized as interesting (see \cite{BDHSS15,GS19} and references therein).
The spherical designs appear to be energy effective; i.e. the upper and lower bounds for their energy are often close each other. Thus it 
is natural to consider energy problems for spherical $(k,k)$-designs. 

\begin{definition}
Given a (potential) function $h(t):[-1,1] \to [0,+\infty]$ and a code $C \subset \mathbb{S}^{n-1}$, the {\em $h$-energy} of $C$ is
\[ E_h(C):=\sum_{x, y \in C, x \neq y} h(\langle x,y \rangle). \]
\end{definition}

Therefore, we are also interested in the minimum and maximum possible $h$-energy of a $(k,k)$-design in $\mathbb{S}^{n-1}$ with given cardinality; i.e., in the quantities
\[ \mathcal{L}_h(n,k,M):=\min \{E_h(C): C \in \mathbb{S}^{n-1} \mbox{ is a $(k,k)$-design}, |C|=M \}, \]
and 
\[ \mathcal{U}_h(n,k,M):=\max \{E_h(C): C \in \mathbb{S}^{n-1} \mbox{ is a $(k,k)$-design}, |C|=M \}. \]

We will introduce general linear programming framework for bounding for the quantities $\mathcal{M}(n,k)$, $\mathcal{L}_h(n,k,M)$,
and $\mathcal{U}_h(n,k,M)$. Then we will derive a universal (in sense of Levenshtein) bound for $\mathcal{M}(n,k)$
as our derivation allows investigations of the optimality of the bounds and the designs which (if exist) would attain these bounds. 

Universal bounds for the energy quantities $\mathcal{L}_h(n,k,M)$ and $\mathcal{U}_h(n,k,M)$ will be considered elsewhere.

\section{General linear programming bounds}

For any real polynomial $f(t)$ we consider its Gegenbauer expansion
\[ f(t)=\sum_{i=0}^m f_i P_i^{(n)}(t), \]
where $m=\deg(f)$, and define the following sets of polynomials 
\[ F_{n,k}:=\{ f(t) \, : \, f_0>0, f_i \leq 0, i=1,3,\ldots,2k-1 \mbox{ and } i \geq 2k+1\}, \]
\[ G_{n,k}:=\{ f(t) \, : \, f_0>0, f_i \geq 0, i=1,3,\ldots,2k-1 \mbox{ and } i \geq 2k+1\}. \]
Since any Gegenbauer polynomial $P_j^{(n)}(t)$ is an odd/even function for odd/even $j$, 
any polynomial $f(t)$  which is an even function has $f_i=0$ for its Gegenbauer coefficients with odd $i$.
This yields that if $\deg(f) \leq 2k$, then $f$ belongs to both $F_{n,k}$ and $G_{n,k}$. 

Further, we define
\[ M_{n,k}:=\{ f(t) \in F_{n,k}\, : f(t) \geq 0 \ \forall \, t \in [-1,1] \}, \]
\[ L_{n,k}^{(h)}:=\{ f(t) \in G_{n,k}\, : f(t) \leq h(t) \ \forall \, t \in [-1,1] \}, \]
\[ U_{n,k}^{(h)}:=\{ f(t) \in F_{n,k}\, : f(t) \geq h(t) \ \forall \, t \in [-1,1] \}. \]

Linear programming for spherical designs was introduced by Delsarte, Goethals and Seidel \cite{DGS} and 
developed for energy bounds by Yudin \cite{Y}. All three bounds in Theorem \ref{thm_lp} below follow easily from the identity
\begin{equation}
  \label{main}
  |C|f(1)+\sum_{x,y\in C, x \neq y} f(\langle x,y\rangle) = |C|^2f_0 + \sum_{i=1}^m f_i M_i 
\end{equation}
 (see, for example, \cite[Equation (1.20)]{lev92}, \cite[Equation (3)]{ZBBKY17}), which
serves as a key source of estimations by linear programming. 
It follows easily by computing in two ways the sum $\sum_{x,y\in C} f(\langle x,y\rangle)$ and using the definition of the moments.

We are now in a position to formulate the general linear programming theorems for the
quantities $\mathcal{M}(n,k)$, $\mathcal{L}_h(n,k,M)$, and $\mathcal{U}_h(n,k,M)$.

\begin{theorem} \label{thm_lp}
a) If $n \geq 2$ and $k$ are positive integers and $f \in M_{n,k}$, then $\mathcal{M}(n,k) \geq f(1)/f_0$. 

b) If $n \geq 2$, $k$, and $M \geq 2$ are positive integers, $h$ is a potential function, and $f \in L_{n,k}^{(h)}$, then  
$\mathcal{L}_h(n,k,M) \geq M(f_0M-f(1))$. 

c) If $n \geq 2$, $k$, and $M \geq 2$ are positive integers, $h$ is a potential function, and $f \in U_{n,k}^{(h)}$, then  
$\mathcal{U}_h(n,k,M) \leq M(f_0M-f(1))$. 
\end{theorem}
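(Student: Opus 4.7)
The whole theorem reduces to a disciplined application of the master identity \eqref{main} to a $(k,k)$-design $C$, so the plan is to write \eqref{main} once and then read off the three inequalities by playing off the sign of the coefficient sum $\sum f_i M_i$ against the pointwise sign of $f-h$ (or of $f$ itself in part (a)).

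The plan for part (a) is as follows. Take $f \in M_{n,k}$ and let $C$ be any $(k,k)$-design with $|C|=M$. Applying \eqref{main} gives
\[
 M f(1) + \sum_{x \neq y} f(\langle x,y \rangle) \;=\; M^{2} f_{0} + \sum_{i=1}^{m} f_{i} M_{i}(C).
\]
The design hypothesis kills the terms with $i \in \{2,4,\dots,2k\}$, leaving only the indices $i=1,3,\dots,2k-1$ and $i \ge 2k+1$; on these indices $f_i \le 0$ by membership in $F_{n,k}$, while $M_i(C) \ge 0$ by Schoenberg's positive-definiteness \cite{Sch1942}. Hence the right-hand side is at most $M^{2} f_{0}$. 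Because $f \ge 0$ on $[-1,1]$, the off-diagonal sum on the left is nonnegative, so $M f(1) \le M^{2} f_{0}$; dividing by $M f_{0} > 0$ yields $M \ge f(1)/f_{0}$, as required.

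For parts (b) and (c) I would rearrange \eqref{main} to isolate the off-diagonal sum:
\[
 \sum_{x \neq y} f(\langle x,y\rangle) \;=\; M(M f_{0} - f(1)) + \sum_{i=1}^{m} f_{i} M_{i}(C),
\]
and then compare it to $E_{h}(C)$ via the pointwise inequality between $f$ and $h$. In part (b), $f \in G_{n,k}$ forces $f_{i} \ge 0$ on the surviving indices (again $M_{2i}=0$ for $1\le i\le k$), so $\sum f_{i} M_{i} \ge 0$; combining with $f \le h$ on $[-1,1]$ gives $E_{h}(C) \ge \sum_{x\neq y} f(\langle x,y\rangle) \ge M(Mf_{0}-f(1))$. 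In part (c), $f \in F_{n,k}$ reverses the sign to $\sum f_{i} M_{i} \le 0$, and $f \ge h$ on $[-1,1]$ flips the pointwise comparison, yielding $E_{h}(C) \le M(Mf_{0}-f(1))$. Taking the infimum/supremum over admissible $C$ then gives the two energy bounds.

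There is no real obstacle here; the only point to be careful about is the bookkeeping of indices, namely checking that for a $(k,k)$-design the only moments $M_i$ contributing to $\sum f_{i}M_{i}$ are exactly the odd $i \le 2k-1$ and the $i \ge 2k+1$, which matches the index set on which the sign conditions defining $F_{n,k}$ and $G_{n,k}$ are imposed. Once this alignment is verified, all three inequalities drop out mechanically.
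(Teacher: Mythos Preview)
Your proof is correct and follows essentially the same approach as the paper: both apply the identity \eqref{main} to an arbitrary $(k,k)$-design, use the vanishing of $M_{2i}$ for $1\le i\le k$ together with the sign constraints on the remaining $f_i$ (and $M_i\ge 0$) to bound the right-hand side, and use the pointwise inequality on $f$ (or $f-h$) to bound the left-hand side. The only cosmetic difference is that in parts (b) and (c) the paper inserts $E_h(C)$ into the left-hand side by adding and subtracting $h$, whereas you isolate the off-diagonal sum first and then compare it to $E_h(C)$; these are the same argument rearranged.
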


\begin{proof}
a) Let $C \subset \mathbb{S}^{n-1}$ be a $(k,k)$-design and $f \in M_{n,k}$. We apply \eqref{main} for $C$ and $f$. 
Since  $M_i \geq 0$ for all $i$ and, in particular, $M_{2i}(C)=0$ for $i=2,4,\ldots,2k$, and $f_i \leq 0$ for all odd $i$ and for all even
$i>2k$, the right hand side of \eqref{main} does not exceed $f_0|C|^2$. 
The sum in the left hand side is nonnegative because $f(t) \geq 0$ for every $t \in [-1,1]$. Thus the left hand side
is at most $f(1)|C|$ and we conclude that $|C| \geq f(1)/f_0$. Since this inequality follows for every $C$, we have 
$\mathcal{M}(n,k) \geq f(1)/f_0.$

b) Now let $C  \subset \mathbb{S}^{n-1}$ be a $(k,k)$-design of cardinality $M$ and $f \in L^{(h)}_{n,k}$. We rewrite the left hand side of \eqref{main} for $C$ and $f$ 
\begin{equation} \label{re-main-energy}
f(1)|C|+E_h(C)+\sum_{x,y\in C, x \neq y} \left( f(\langle x,y\rangle)-h(\langle x,y\rangle)\right)=|C|^2f_0 + \sum_{i=1}^m f_i M_i 
\end{equation}
to involve the energy $E_h(C)$.

Similarly to a), we conclude that the right hand side of \eqref{re-main-energy} is at least $f_0|C|^2$ and the 
left hand side does not exceed $f(1)|C|+E_h(C)$ (observe that the sum in the left hand side is nonpositive because of the 
condition $f(t) \leq h(t)$ for every $t \in [-1,1]$). 
Therefore $E_h(C) \geq |C|(f_0|C|-f(1))$. Since this follows for 
every such $C$, we conclude that $\mathcal{L}_h(n,M,k) \geq M(f_0M-f(1))$.

c) If $C  \subset \mathbb{S}^{n-1}$ is a $(k,k)$-design of cardinality $M$ and $f \in U^{(h)}_{n,k}$, then as in b) 
we use \eqref{re-main-energy} to see that $E_h(C) \leq |C|(f_0|C|-f(1))$, whence  
$\mathcal{U}_h(n,M,k) \leq M(f_0M-f(1))$.
\end{proof}

The conditions for achieving equality in all three bounds of Theorem \ref{thm_lp} are obviously the same -- one need to have
inner products $\langle x,y \rangle$, $x,y \in C$, $x \neq y$, only equal to roots of $f(t)$, and $f_iM_i=0$ for all odd $i$ 
and all $i \geq 2k+1$. 

We conclude this section with an application of the addition formula (see \cite[Theorem 3.3]{DGS}, \cite[Section 3]{Lev-chapter} in the designs' context)
\[ P_i^{(n)} (\langle x,y \rangle) = \frac{1}{r_i} \sum_{j=1}^{r_i} v_{ij}(x) v_{ij}(y) \]
where $r_i=\dim \mbox{Harm}(i)$ and $\{v_{ij}(x): j=1,2,\ldots,r_i\}$ is an orthonormal basis of Harm$(i)$,
the space of homogeneous harmonic polynomials of degree $i$ on $\mathbb{S}^{n-1}$. 

\begin{theorem} \label{moments-degs}
We have $M_i(C)=0$ if and only if $\sum_{x \in C} P_i^{(n)}(\langle x,y \rangle)=0$ for any fixed $y \in C$. 
\end{theorem}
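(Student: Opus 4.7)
The plan is to exploit the addition formula as a device for turning $M_i(C)$ into a sum of squares of linear functionals on the code, from which both implications fall out cleanly.

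First, substitute the addition formula into the definition of $M_i(C)$ to obtain
\[
M_i(C)=\sum_{x,y\in C}P_i^{(n)}(\langle x,y\rangle)=\frac{1}{r_i}\sum_{j=1}^{r_i}\left(\sum_{x\in C}v_{ij}(x)\right)^{\!2},
\]
after swapping the order of summation and factoring the product $v_{ij}(x)v_{ij}(y)$. This exhibits $M_i(C)$ as a nonnegative sum (giving another quick derivation of $M_i(C)\ge 0$), and, crucially, tells us that $M_i(C)=0$ if and only if $\sum_{x\in C}v_{ij}(x)=0$ for every $j=1,\ldots,r_i$.

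Next, apply the addition formula once more, this time to $\sum_{x\in C}P_i^{(n)}(\langle x,y\rangle)$ for a fixed $y$, to get
\[
\sum_{x\in C}P_i^{(n)}(\langle x,y\rangle)=\frac{1}{r_i}\sum_{j=1}^{r_i}v_{ij}(y)\sum_{x\in C}v_{ij}(x).
\]
From this identity, the $\Rightarrow$ direction is immediate: if $M_i(C)=0$, then every inner sum $\sum_{x\in C}v_{ij}(x)$ vanishes, so the right-hand side is identically zero in $y$ (in fact for every $y\in\mathbb{S}^{n-1}$, not merely for $y\in C$).

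For the converse, I would simply sum the hypothesis over $y\in C$: if $\sum_{x\in C}P_i^{(n)}(\langle x,y\rangle)=0$ for every $y\in C$, then summing over $y$ recovers exactly $M_i(C)$, forcing it to be $0$. No serious obstacle is anticipated; the only mild subtlety is being clear that in the forward direction one gets the stronger conclusion that the sum vanishes for every $y\in\mathbb{S}^{n-1}$, while the converse direction needs the hypothesis only on $y\in C$ because of the sum-over-$y$ trick.
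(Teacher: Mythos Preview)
Your argument is correct and essentially identical to the paper's: both apply the addition formula first to rewrite $M_i(C)$ as a nonnegative combination of the quantities $\bigl(\sum_{x\in C}v_{ij}(x)\bigr)^2$, and then again to express each row sum $\sum_{x\in C}P_i^{(n)}(\langle x,y\rangle)$ as a linear combination of the $\sum_{x\in C}v_{ij}(x)$. Your treatment of the converse (summing the hypothesis over $y\in C$) is more explicit than the paper's, which leaves that direction largely implicit.
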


\begin{proof}
Computing $M_i(C)$ by the addition formula, we see that $M_i(C)=0$ if and only if $\sum_{x \in C} v(x)=0$ for 
each $v \in \mbox{Harm}(i)$. Using this and the addition formula again we obtain that the double sum in 
\eqref{Mk0} splits into $|C|$ sums each one equal to $0$. Indeed, for fixed $y \in C$, we consecutively obtain
\[ \sum_{x \in C} P_i^{(n)}(\langle x,y \rangle)=\sum_{x \in C} \frac{1}{r_i} \sum_{j=1}^{r_i} v_{ij}(x)\overline{v_{ij}(y)} =
 \frac{1}{r_i}  \sum_{j=1}^{r_i} \overline{v_{ij}(y)} \sum_{x \in C}v_{ij}(x) = 0, \]
which completes the proof. \end{proof}

\section{A universal bound for $\mathcal{M}(n,k)$}

Suitable polynomials in Theorem \ref{thm_lp} may give universal (in sense of Levenshtein \cite{Lev-chapter}) bounds. We 
present here such a bound for $\mathcal{M}(n,k)$ using a polynomial which is suggested from the choice of 
Delsarte, Goethals and Seidel in \cite{DGS}.

Denote
 $B(n,m):=\min\{|C|: C \subset \mathbb{S}^{n-1} \mbox{ is a spherical $m$-design}\}$.
The Delsarte-Goethals-Seidel bound \cite{DGS} 
\begin{equation}
\label{DGS-bound}
B(n,m) \geq D(n,m):= \left\{ \begin{array}{ll}
 \ds 2\binom{n+k-2}{k-1}, & \mbox{ if $m=2k-1$,} \\[12pt]
 \ds \binom{n+k-1}{k}+\binom{n+k-2}{k-1}, & \mbox{ if  $m=2k$}.
\end{array}
  \right.
\end{equation}
was obtained by linear programming via the polynomials
\begin{eqnarray}
\label{DGS-poly}
     d_{m}(t) = \left\{
     \begin{array}{ll}
        (t+1)\left(P_{k-1}^{1,1}(t)\right)^2, & \mbox{if } m=2k-1 \\
        \left(P_k^{1,0}(t)\right)^2, & \mbox{if }m=2k
     \end{array} \right. .
\end{eqnarray}
Here $P_i^{1,1}(t)$ and $P_i^{1,0}(t)$ are polynomials called adjacent\footnote{In fact, they are (normalized) Jacobi polynomials
with parameters }
by Levenshtein 
(see \cite{lev92,Lev-chapter}). What is important for us is that $P_i^{1,1}(t)=P_i^{(n+2)}(t)$ is again a Gegenbauer polynomial, 
in particular, it is an even or odd function. 

\begin{theorem}
We have
\begin{equation} \label{lb-card}
\mathcal{M}(n,k) \geq {n+k-1 \choose k}.
\end{equation}
If a $(k,k)$-design $C \subset \mathbb{S}^{n-1}$ attains this bound, then all inner products $\langle x,y \rangle$ of distinct $x,y \in C$
are among the zeros of $P_k^{(n+2)}(t)$. 
\end{theorem}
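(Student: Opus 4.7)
My plan is to apply Theorem~\ref{thm_lp}(a) with the polynomial
\[ f(t) := \bigl(P_k^{(n+2)}(t)\bigr)^2, \]
paralleling the Delsarte--Goethals--Seidel choice $d_{2k}(t) = (P_k^{1,0}(t))^2$ in \eqref{DGS-poly} but using the other adjacent polynomial $P_k^{1,1}(t) = P_k^{(n+2)}(t)$. Membership $f\in M_{n,k}$ is immediate: $f$ has degree $2k$ and is an even function (being the square of an even or odd function), so by the parity remark in the paper all its odd-index Gegenbauer coefficients vanish and there are no coefficients of index $>2k$; the sign conditions defining $F_{n,k}$ are then trivially satisfied. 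Non-negativity $f\ge 0$ on $[-1,1]$ is automatic, and $f_0>0$ because $f$ is nonzero and non-negative.

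The heart of the proof is the computation $f(1)/f_0 = \binom{n+k-1}{k}$. Clearly $f(1) = (P_k^{(n+2)}(1))^2 = 1$, so the task is to show $f_0 = 1/\binom{n+k-1}{k}$. My plan is to first establish the identity
\[
P_k^{(n+2)}(t) \;=\; \frac{1}{\binom{n+k-1}{k}}\sum_{\substack{0\le j\le k \\ j\equiv k\,(\bmod 2)}} r_j^{(n)}\, P_j^{(n)}(t),
\]
where $r_j^{(n)} = \dim \mathrm{Harm}(j)$. Via the addition formula from the end of Section~3, the sum on the right is exactly the zonal reproducing kernel of the subspace $\mathrm{Hom}_k(\mathbb{R}^n)|_{\mathbb{S}^{n-1}} = \bigoplus_{j\equiv k\,(\bmod 2),\, j\le k}\mathrm{Harm}(j;n)$, whose dimension telescopes to $\sum_{j} r_j^{(n)} = \binom{n+k-1}{k}$. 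Squaring the identity and extracting the constant Gegenbauer coefficient using the orthogonality $\langle P_j^{(n)},P_{j'}^{(n)}\rangle = \delta_{jj'}/r_j^{(n)}$ gives
\[
f_0 \;=\; \frac{1}{\binom{n+k-1}{k}^2}\sum_{j} \frac{(r_j^{(n)})^2}{r_j^{(n)}} \;=\; \frac{\sum_j r_j^{(n)}}{\binom{n+k-1}{k}^2} \;=\; \frac{1}{\binom{n+k-1}{k}},
\]
and Theorem~\ref{thm_lp}(a) then delivers $\mathcal{M}(n,k) \ge \binom{n+k-1}{k}$.

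For the equality characterization, the conditions stated right after Theorem~\ref{thm_lp} require (i) $f(\langle x,y\rangle)=0$ for every pair of distinct $x,y\in C$, and (ii) $f_i M_i(C) = 0$ for every odd $i$ and every $i\ge 2k+1$. Condition (ii) is vacuous since $f_i = 0$ for those indices, and (i) forces every inner product to be a root of $\bigl(P_k^{(n+2)}(t)\bigr)^2$, equivalently a root of $P_k^{(n+2)}(t)$.

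I expect the main obstacle to be justifying the reproducing-kernel identity for $P_k^{(n+2)}$. This is classical (an identification of the zonal kernel of $\mathrm{Hom}_k(\mathbb{R}^n)|_{\mathbb{S}^{n-1}}$ with a Gegenbauer polynomial in dimension $n+2$), but a careful proof requires either dimension counting together with uniqueness of zonal reproducing kernels, or direct verification using the Gegenbauer three-term recurrence and the derivative identity $(P_{k+1}^{(n)})'(t) = \tfrac{(k+1)(n+k-1)}{n-1}P_k^{(n+2)}(t)$. Low-dimensional sanity checks, e.g.\ $(P_1^{(5)})^2 = \tfrac{1}{3} + \tfrac{2}{3}P_2^{(3)}$ giving $f_0 = 1/3 = 1/\binom{3}{1}$, catch any normalization slips.
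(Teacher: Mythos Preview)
Your proposal is correct and uses the same polynomial $f(t)=(P_k^{(n+2)}(t))^2$ as the paper, with the same verification that $f\in M_{n,k}$ and the same equality analysis. The only real difference is in how you evaluate $f(1)/f_0$. You expand $P_k^{(n+2)}$ in the $P_j^{(n)}$-basis via the reproducing-kernel identity for $\mathrm{Hom}_k(\mathbb{R}^n)|_{\mathbb{S}^{n-1}}$ and then square to extract $f_0$; this is self-contained and makes the appearance of $\binom{n+k-1}{k}$ transparent as a dimension count, but it requires you to actually prove that kernel identity. The paper instead observes that $d_{2k+1}(t)=(t+1)f(t)$, so $f(1)=d_{2k+1}(1)/2$, and that $tf(t)$ is an odd function, so $\int_{-1}^1 f(t)(1-t^2)^{(n-3)/2}\,dt=\int_{-1}^1 d_{2k+1}(t)(1-t^2)^{(n-3)/2}\,dt$; hence $f$ and $d_{2k+1}$ share the same constant Gegenbauer coefficient, and $f(1)/f_0=D(n,2k+1)/2=\binom{n+k-1}{k}$ follows immediately from the already-known Delsarte--Goethals--Seidel value \eqref{DGS-bound}. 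The paper's route is shorter because it piggybacks on \eqref{DGS-bound}; yours is more explicit about the combinatorics but carries the extra burden of the kernel identity.
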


\begin{proof}
We are going to use the polynomial $f(t)=\left(P_k^{(n+2)}(t)\right)^2=d_{2k+1}(t)/(t+1)$
in Theorem \ref{thm_lp}a).
It is obvious that $f(t) \geq 0$ for every $t \in [-1,1]$. Moreover, since $P_k^{(n+2)}(t)$ is an odd or 
even function, its square is an even function. Then $f_i=0$ for every odd $i$ in the Gegenbauer expansion of 
our $f(t)$ and we conclude that $f \in M_{n,k}$. 

The calculation of $f(1)/f_0$ follows from the classical one by noting that (obviously) $f(1)=d_{2k+1}(1)/2$ 
and the Gegenbauer coefficients $f_0$ of the polynomials $f(t)$ and $d_{2k+1}(t)$ coincide since
\[ \int_{-1}^1 f(t) (1-t^2)^{(n-3)/2} dt =  \int_{-1}^1 d_{2k+1}(t) (1-t^2)^{(n-3)/2} dt. \]
Thus our bound $f(1)/f_0$ is equal to $D(n,2k+1)$, i.e. half of the value of the  Delsarte-Goethals-Seidel bound for $(2k+1)$-designs.

If a $(k,k)$-design $C \subset \mathbb{S}^{n-1}$ attains the bound \eqref{lb-card}, then equality in \eqref{main} 
follows (for $C$ and our $f(t)$). Since $f_iM_i(C)=0$ for every $i$, the equality $|C|=f(1)/f_0$ is equivalent to 
\[ \sum_{x,y\in C, x \neq y} \left(P_k^{(n+2)} (\langle x,y\rangle)\right)^2=0, \]
whence $P_k^{(n+2)} (\langle x,y\rangle)=0$ whenever $x $ and $y$ are distinct points from $C$. 
\end{proof}

The bound \eqref{lb-card} was obtained by Waldron \cite[Exercise 6.23]{Wal-book} in different way (see also (5.10) in \cite{DS89}
which concerns the case $T=2k$). 
The linear programming interpretation is new and answers the optimality question for $T=\{2,4,\ldots,2k\}$ (see the 
optimality discussion in Section 3 in \cite{ZBBKY17}). 

\begin{theorem}\label{opt}
The bound \eqref{lb-card} is optimal in the sense that it can not be improved by using in Theorem \ref{thm_lp}a) 
a polynomial from $M_{n,k}$ of degree at most $2k$. 
\end{theorem}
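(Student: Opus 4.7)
My plan is to reduce the question to the classical optimality of the Delsarte--Goethals--Seidel bound $D(n,2k+1)=2\binom{n+k-1}{k}$ for spherical $(2k+1)$-designs (the first Levenshtein bound), which is attained in Theorem~\ref{thm_lp}a) by $d_{2k+1}(t)=(1+t)(P_k^{(n+2)}(t))^2$ and is known to be the supremum of $\tilde f(1)/\tilde f_0$ over admissible polynomials $\tilde f$ of degree at most $2k+1$.

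First I reduce to the case that $f$ is even. Given $f\in M_{n,k}$ with $\deg f\le 2k$, set $f_e(t):=(f(t)+f(-t))/2$. Then $f_e\in M_{n,k}$ (its odd-index Gegenbauer coefficients vanish, the even ones are those of $f$), $\deg f_e\le 2k$, and the zeroth Gegenbauer coefficient $f_{e,0}$ equals $f_0$, because the odd-parity component of $f$ integrates to zero against the symmetric weight $(1-t^2)^{(n-3)/2}$. Using $P_i^{(n)}(1)=1$ together with the sign conditions $f_i\le 0$ for odd $i\in\{1,3,\ldots,2k-1\}$, we obtain
\[
f(1)=\sum_{i=0}^{2k}f_i\le \sum_{i\text{ even}}f_i=f_e(1),
\]
so $f(1)/f_0\le f_e(1)/f_{e,0}$. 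Hence it suffices to prove the bound assuming $f$ is even.

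For such an even $f$, set $\tilde f(t):=(1+t)f(t)$. I would verify that $\tilde f(t)\ge 0$ on $[-1,1]$, that $\deg\tilde f\le 2k+1$ (so automatically $\tilde f_j=0$ for $j\ge 2k+2$), and that $\tilde f_0=f_0$, since $\int_{-1}^1 tf(t)(1-t^2)^{(n-3)/2}\,dt=0$ by parity. These are precisely the hypotheses required to apply Theorem~\ref{thm_lp}a) to $\tilde f$ as a Delsarte--Goethals--Seidel test polynomial for spherical $(2k+1)$-designs, and $\tilde f(1)=2f(1)$. By the classical optimality of the DGS bound within polynomials of degree $\le 2k+1$ (the first Levenshtein bound, see \cite{DGS,Lev-chapter}), we have $\tilde f(1)/\tilde f_0\le D(n,2k+1)=2\binom{n+k-1}{k}$, which gives $f(1)/f_0\le \binom{n+k-1}{k}$. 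Combined with the reduction above, this proves the theorem.

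The main obstacle is the appeal to the fact that $D(n,2k+1)$ is the supremum of $\tilde f(1)/\tilde f_0$ over all DGS-admissible polynomials of degree $\le 2k+1$; this is classical but is not proved in the present text. A fully self-contained alternative would proceed directly via the substitution $s=t^2$, which turns even polynomials of degree $\le 2k$ nonnegative on $[-1,1]$ into polynomials of degree $\le k$ nonnegative on $[0,1]$, with the Jacobi probability measure of parameters $(-1/2,(n-3)/2)$ playing the role of the Gegenbauer weight. A Markov--Lukács decomposition on $[0,1]$ together with an evaluation of the Christoffel function at the endpoint $s=1$ would then identify the extremal polynomial as $(P_k^{(n+2)})^2$ with extremal value $\binom{n+k-1}{k}$.
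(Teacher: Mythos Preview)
Your proof is correct and takes a genuinely different route from the paper's. The paper argues directly via Levenshtein's quadrature formula
\[
f_0=\frac{f(1)+f(-1)}{D(n,2k+1)}+\sum_{i=1}^k \rho_i^{(k)} f(t_i^{1,1}),
\]
valid for $\deg f\le 2k$, and then invokes the test-function machinery of \cite{NN03}: one shows that improvability of \eqref{lb-card} within $M_{n,k}$ is equivalent to $Q_j^{(n)}(k)<0$ for some $j$, and observes that $Q_j^{(n)}(k)=0$ for all $1\le j\le 2k$ because the quadrature reproduces the zeroth Gegenbauer coefficient of $P_j^{(n)}$. Your argument instead mirrors the construction of the bound itself: symmetrize (which is painless since the odd coefficients are $\le 0$ and $P_i^{(n)}(1)=1$), then multiply by $(1+t)$ to land in the admissible class for $(2k+1)$-designs, and invoke the classical optimality of $D(n,2k+1)$ among nonnegative polynomials of degree $\le 2k+1$. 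Each approach relies on an external ingredient of comparable depth --- the paper on the test-function criterion, you on the DGS/Levenshtein optimality (both ultimately consequences of the same quadrature) --- but yours makes the structural link between \eqref{lb-card} and the odd Delsarte--Goethals--Seidel bound more transparent, while the paper's test-function route is better adapted if one later wants to analyze improvability by polynomials of degree $>2k$. Your closing remark about the substitution $s=t^2$ and the Christoffel function for the Jacobi weight $(\alpha,\beta)=(-1/2,(n-3)/2)$ on $[0,1]$ would give a third, fully self-contained proof.
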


\begin{proof} We use a special case of the quadrature formula in Levenshtein's Theorem 5.39 from \cite{Lev-chapter}, namely
\begin{equation} \label{quad-2k} 
f_0=\frac{f(1)+f(-1)}{D(n,2k+1)}+\sum_{i=1}^k \rho_i^{(k)} f(t_i^{1,1}), 
\end{equation}
where the weights $\rho_i^{(k)}$ are positive and $t_1^{1,1}<t_2^{1,1}<\cdots<t_k^{1,1}$ are the 
zeros of $P_k^{1,1}(t)$. The formula \eqref{quad-2k} holds true for every real polynomial of degree
at most $2k$. Defining, as in \cite{NN03}, test functions
\[ Q_j^{(n)}(k):=\frac{P_j^{(n)}(1)+P_j^{(n)}(-1)}{D(n,2k+1)}+\sum_{i=1}^k \rho_i^{(k)} P_j^{(n)}(t_i^{1,1}), \ j=1,2,\ldots, \]
one proves that the bound \eqref{lb-card} can be improved by Theorem \ref{thm_lp}a) if and only if $Q_j^{(n)}(k)<0$ 
for some $j$. It follows from \eqref{quad-2k} that $Q_j^{(n)}(k)$, $j \leq 2k$, is equal to the Gegenbauer coefficient $f_0$ of 
$P_j^{(n)}(t)$, which is, of course, 0 for $1 \leq j \leq 2k$ (in fact, $Q_j^{(n)}(k)=0$ for every odd $j$). 
Therefore $Q_j^{(n)}(k)<0$ is impossible for $1 \leq j \leq 2k$, which completes the proof.
\end{proof}

\begin{remark} 
Optimality results using test functions as above originate from \cite{BDB96}, where necessary and 
sufficient conditions for existence of improvements of 
the Levenshtein bounds were proved (see also Theorem 5.47 in \cite{Lev-chapter}). The corresponding result for the 
Delsarte-Goethals-Seidel bound was proven in \cite{NN03}. 
\end{remark}

\section{On codes attaining the bound \eqref{lb-card}}


The basic example of spherical $(k,k)$-designs comes naturally from antipodal spherical $(2k+1)$-designs.
A spherical code $C$ is called antipodal if $C=-C$. 

\begin{example} \label{ex1} Let $C \subset \mathbb{S}^{n-1}$ be an antipodal spherical $(2k+1)$-design. Consider the
spherical code $C^\prime \subset \mathbb{S}^{n-1}$ formed by the following rule: from each pair $(x,-x)$ of 
antipodal points of $C$ exactly one of the points $x$ and $-x$ belongs to $C^\prime$. Then $C^\prime$ is
a spherical $(k,k)$-design. Indded, it is easy to see in \eqref{Mk0} that $M_{2i}(C^\prime)=M_{2i}(C)/2=0$ for $i=1,2,\ldots,k$
because of $|C^\prime|=|C|/2$ and $P_{2i}^{(n)}(t)=P_{2i}^{(n)}(-t)$ for every $t$.

So any orthonormal basis is an $(1,1)$-design and its "doubling" gives a (tight) spherical 3-design. Further, any six 
points of the icosahedron no two of which are antipodal form a $(2,2)$-design since the icosahedron is a (tight) 5-design. 
There are many similar examples (see \cite{Wal-book}).
\end{example}

The other direction of Example \ref{ex1} works as follows. If $C \subset \mathbb{S}^{n-1}$ is a $(k,k)$-design 
and $C \cap -C = \phi$, then $C \cup -C$ is an antipodal $(2k+1)$-design by using \eqref{Mk0}.

It follows from Example \ref{ex1} and its reverse that the bound \eqref{lb-card} is attained exactly when there exist 
an antipodal spherical $(2k+1)$-design with $ 2{n+k-1 \choose k}$ points. Such designs are called tight and were 
classified by Bannai and Damerell \cite{BD1,BD2}. Their classification immediately implies the following. 

\begin{theorem} \label{tight-bd}
If $C \subset \mathbb{S}^{n-1}$ is a $(k,k)$-design with $\mathcal{M}(n,k)={n+k-1 \choose k}$ points, then one of the following holds true:

(i) $k=1$ and $C$ defines an orthonormal basis of $\mathbb{R}^n$;

(ii) $k=2$, $n=3$ or $n=u^2-2$, where $u$ is an odd positive integer; 

(iii) $k=3$, $n=3v^2-4$, where $v \geq 2$ is a positive integer;

(iv) $k=5$, $n=24$.  
\end{theorem}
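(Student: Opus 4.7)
The plan is to reduce the classification entirely to the Bannai--Damerell classification of tight antipodal spherical $(2k+1)$-designs, exploiting the doubling correspondence of Example \ref{ex1} together with the paragraph immediately following it. The key arithmetic observation is that $\binom{n+k-1}{k}$ is precisely half of the Delsarte--Goethals--Seidel bound $D(n,2k+1)=2\binom{n+k-1}{k}$ read off from \eqref{DGS-bound} with $m=2k+1$; equivalently, a $(k,k)$-design meeting \eqref{lb-card} should correspond to a tight antipodal $(2k+1)$-design on $2\binom{n+k-1}{k}$ points.

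First I would verify that any $C$ attaining \eqref{lb-card} automatically satisfies $C\cap(-C)=\emptyset$. The optimality clause of the preceding theorem states that every inner product $\langle x,y\rangle$ with $x\neq y$ in $C$ is a zero of $P_k^{(n+2)}(t)$; since the zeros of any Gegenbauer polynomial $P_k^{(n+2)}$ with $k\ge 1$ lie in the open interval $(-1,1)$, in particular none equals $-1$, so no two distinct points of $C$ are antipodal.

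With disjointness in hand, the reverse-direction observation after Example \ref{ex1} gives that $C':=C\cup(-C)$ is an antipodal spherical $(2k+1)$-design with $|C'|=2|C|=D(n,2k+1)$, hence a tight antipodal spherical $(2k+1)$-design. Conversely, Example \ref{ex1} itself turns any tight antipodal $(2k+1)$-design into a $(k,k)$-design of cardinality $\binom{n+k-1}{k}$. Thus the $(k,k)$-designs attaining \eqref{lb-card} exist in dimension $n$ precisely when tight antipodal spherical $(2k+1)$-designs do.

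The final step is to invoke the Bannai--Damerell classification \cite{BD1,BD2}, which rules out tight spherical $(2k+1)$-designs outside the listed parameters: $k=1$ admits cross-polytopes in every dimension (giving orthonormal bases as $(1,1)$-designs); $k=2$ forces $n=3$ or $n=u^2-2$ with $u$ odd; $k=3$ forces $n=3v^2-4$; and $k=5$ forces $n=24$, with all other strengths excluded. Transcribing these dimension conditions into our normalization $|C|=|C'|/2$ yields exactly (i)--(iv). The main obstacle is not mathematical but bookkeeping: aligning the strength parameter $m=2k+1$ with the Bannai--Damerell conventions and verifying each dimension formula matches case by case.
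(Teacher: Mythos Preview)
Your proposal is correct and follows essentially the same route as the paper: pass from a $(k,k)$-design attaining \eqref{lb-card} to a tight antipodal $(2k+1)$-design via the doubling correspondence, then invoke Bannai--Damerell. You make explicit the verification that $C\cap(-C)=\emptyset$ (using that the zeros of $P_k^{(n+2)}$ lie in $(-1,1)$), a step the paper leaves implicit; note only that the relevant inner-product clause is the second assertion of the bound theorem (Theorem~4.1), not the optimality theorem.
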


Examples for (ii) and (iii) are only known for $u=3$ and 5 and $v=2$ and 3, respectively. The distance distributions of the 
related tight spherical 5- and 7-designs for (ii) and (iii) were found by the author in \cite{Boy95}. The related tight 11-design 
for (iv) is formed by the $2{28 \choose 5}$ vectors of minimum norm in the Leech lattice. 

\begin{theorem}
There exist no $(2,2)$-designs on $\mathbb{S}^{n-1}$, $n \geq 3$, with ${n+1 \choose 2}+1$ points. 
\end{theorem}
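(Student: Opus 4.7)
Suppose for contradiction that $C = \{x_1,\ldots,x_N\}\subset \mathbb{S}^{n-1}$ is a $(2,2)$-design with $N = \binom{n+1}{2}+1$. Put $\tilde X_i := x_ix_i^T - \tfrac{1}{n}I$, a traceless symmetric $n\times n$ matrix in $\mathrm{Sym}^2_0(\mathbb{R}^n)$; let $r := \binom{n+1}{2}-1 = (n-1)(n+2)/2$ denote the dimension of this space. The plan is to analyze the Gram matrix $\tilde G_{ij} := \langle \tilde X_i,\tilde X_j\rangle_F = \langle x_i,x_j\rangle^2 - 1/n$ via a rank-plus-Cauchy-Schwarz argument.

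Since each $\tilde X_i$ lies in $\mathrm{Sym}^2_0$, we have $\mathrm{rank}(\tilde G)\le r$. The $(2,2)$-design identities
\[
\sum_{i,j}\langle x_i,x_j\rangle^2 = \frac{N^2}{n}, \qquad \sum_{i,j}\langle x_i,x_j\rangle^4 = \frac{3N^2}{n(n+2)},
\]
which follow directly from $M_2(C) = M_4(C) = 0$ by expanding $t^2$ and $t^4$ in the Gegenbauer basis, give $\mathrm{tr}(\tilde G) = N(n-1)/n$ and $\|\tilde G\|_F^2 = 2N^2(n-1)/(n^2(n+2))$. Cauchy-Schwarz on the eigenvalues of $\tilde G$ reads $(\mathrm{tr}\,\tilde G)^2 \le \mathrm{rank}(\tilde G) \cdot \|\tilde G\|_F^2$, which rearranges to $\mathrm{rank}(\tilde G) \ge r$; together with the reverse bound this forces $\mathrm{rank}(\tilde G) = r$ with all nonzero eigenvalues of $\tilde G$ equal. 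Hence $\tilde G = \lambda P$ for a rank-$r$ orthogonal projection $P$ with $\lambda = 2N/(n(n+2))$, and $\ker\tilde G$ has dimension exactly $N-r = 2$, containing the all-ones vector (from the $2$-design identity $\sum_i \tilde X_i = 0$) together with an independent vector $v'$ normalized so that $\sum_i v'_i = 0$ and $\sum_i v'_i x_ix_i^T = 0$.

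The final step is to exclude this nontrivial linear relation by invoking $M_4(C) = 0$. One natural attempt expands $\sum_i v'_i\langle x_i, y\rangle^4$ as a polynomial in $y$ and uses Theorem~\ref{moments-degs} together with the addition formula to pin down its Gegenbauer components, ultimately forcing $v' = 0$. An alternative route splits on whether $C$ contains an antipodal pair: if $C \cap (-C) = \emptyset$, then the doubling $C\cup(-C)$ is an antipodal spherical $5$-design of cardinality $2\binom{n+1}{2}+2$, whose nonexistence can be approached via either a refined LP bound (using that antipodal codes admit even polynomials freely) or a direct analysis in the spirit of Bannai-Damerell; if $C \cap (-C) \ne \emptyset$, an explicit use of an antipodal pair combined with Theorem~\ref{moments-degs} at $i = 2, 4$ pins $v'$ down to a multiple of the all-ones vector, again contradicting $\dim\ker\tilde G = 2$. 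The main obstacle is this last step: the rank argument alone is symmetric in $N$ (it works for any $N > r$), so leveraging $M_4(C) = 0$ in a way that genuinely distinguishes $N = r + 2$ from larger feasible cardinalities is where the true work lies.
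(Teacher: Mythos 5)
There is a genuine gap: the decisive step is never carried out. Your rank/Cauchy--Schwarz setup is internally consistent (the moment identities, $\mathrm{tr}\,\tilde G=N(n-1)/n$, $\|\tilde G\|_F^2=2N^2(n-1)/(n^2(n+2))$, and the conclusion $\tilde G=\lambda P$ with $\lambda=2N/(n(n+2))$ all check out), but as you yourself note it is symmetric in $N$ and yields nothing specific to $N=\binom{n+1}{2}+1$. Worse, the existence of the relation $v'$ with $\sum_i v_i'=0$ and $\sum_i v_i' x_ix_i^T=0$ is automatic from dimension counting alone ($N=\dim\mathrm{Sym}^2(\mathbb{R}^n)+1$, and the trace of the relation vanishes), so the entire Gram-matrix apparatus has not yet produced information beyond linear algebra. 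Everything that would actually prove the theorem is deferred to two sketched alternatives, neither executed. In route (b), the antipodal-free case requires the nonexistence of an antipodal spherical $5$-design with $n^2+n+2=n(n+1)+2$ points; this is \emph{not} delivered by a ``refined LP bound'' --- the Delsarte--Goethals--Seidel bound for $5$-designs is exactly $n(n+1)$, so tight-plus-two configurations are not excluded by positivity, and what is really needed here is Reznick's theorem \cite{Rez95} (or an equivalent argument of comparable depth). Asserting it can ``be approached'' in the spirit of Bannai--Damerell is not a proof.

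For comparison, the paper's proof is short and concrete. Step one: if $C$ contained an antipodal pair, then expanding $t^4$ in the Gegenbauer basis and applying Theorem \ref{moments-degs} with $M_2(C)=M_4(C)=0$ gives, for each fixed $y\in C$,
\[
1+\sum_{x\in C\setminus\{y\}}\langle x,y\rangle^4=\frac{3|C|}{n(n+2)},
\]
and choosing $y$ with an antipode in $C$ forces $3|C|/(n(n+2))\ge 2$, i.e.\ $(n+6)(n-1)\le 0$, a contradiction for $n\ge 3$. Step two: since $C\cap(-C)=\emptyset$, the doubling $C\cup(-C)$ is an antipodal $5$-design with $n^2+n+2$ points, whose nonexistence is Reznick's result. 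Your route (b) is essentially this outline, but both halves are left as intentions: the antipodal-pair exclusion is described only as ``pinning $v'$ down'' rather than the direct numerical contradiction above, and the doubled-design nonexistence is the hard external input you would still need to prove or cite. Until one of your two final steps is actually completed, the proposal does not establish the theorem.
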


\begin{proof}
We first see that a spherical $(2,2)$-design of $1+n(n+1)/2$ points cannot possess a pair of antipodal points. 
Assume that $C$ is such a design. Using the Gegenbauer expansion of $t^4$ and the conditions $M_2(C)=M_4(C)=0$, 
we obtain by Theorem \ref{moments-degs} that
\[ 1+\sum_{x \in C \setminus \{y\}} \langle x,y \rangle^4 = \frac{3|C|}{n(n+2)} \]
for any fixed $y \in C$. Using this for $y$ such that $\langle x,y \rangle =-1$ for some $x \in C$, we obtain
$3|C|/n(n+2)-2 \geq 0 \iff 3(n^2+n+2) \geq 4n(n+2)$, 
which gives a contradiction. 

Let $C \subset \mathbb{S}^{n-1}$ is a $(2,2)$-design with ${n+1 \choose 2}+1$ points. Since $C \cap -C =\phi$, we conclude that
$C \cup -C$ is an antipodal 5-design with $n^2+n+2$ points. Now the proof is completed by noting that the 
nonexistence of such designs for $n \geq 3$ was shown by Reznick \cite{Rez95}. 
\end{proof}

\end{document}